\newtheorem{theorem}{Theorem}[section]
\theoremstyle{plain}
\newtheorem{corollary}{Corollary}[section]
\newtheorem{definition}{Definition}[section]
\newtheorem{lemma}{Lemma}[section]
\numberwithin{equation}{section}
\begin{document}
\title[]{ON WEAK $r$-HELIX SUBMANIFOLDS }
\author{Evren Z\i plar}
\address{Department of Mathematics, Faculty of Science, University of
Ankara, Tando\u{g}an, Turkey}
\email{evrenziplar@yahoo.com}
\urladdr{}
\author{Ali \c{S}enol}
\address{Department of Mathematics, Faculty of Science, \c{C}ank\i r\i\
Karatekin University, \c{C}ank\i r\i , Turkey}
\email{asenol@karatekin.edu.tr}
\author{Yusuf Yayl\i }
\address{Department of Mathematics, Faculty of Science, University of
Ankara, Tando\u{g}an, Turkey}
\email{yayli@science.ankara.edu.tr}
\thanks{}
\urladdr{}
\date{}
\subjclass[2000]{ \ 53A04, 53B25, 53C40, 53C50.}
\keywords{Weak $r$-helix submanifold; Line of curvature; Asymptotic curve;
Helix line.\\
Corresponding author: Evren Z\i plar, e-mail: evrenziplar@yahoo.com}
\thanks{}

\begin{abstract}
In this paper, we investigate special curves on a weak $r$-helix submanifold
in Euclidean $n$-space $E^{n}$. Also, we give the important relations
between weak $r$-helix submanifolds and the special curves such as line of
curvature, asymptotic curve and helix line.
\end{abstract}

\maketitle

\section{Introduction}

In differential geometry of manifolds, an helix submanifold of $%
\mathbb{R}
^{n}$ with respect to a fixed direction $d$ in $%
\mathbb{R}
^{n}$ is defined by the property that tangent planes make a constant angle
with the fixed direction $d$ (helix direction) in [4]. Di Scala and Ruiz-Hern%
\'{a}ndez have introduced the concept of these manifolds in [4]. Besides,
the concept of weak $r$-helix submanifold of $%
\mathbb{R}
^{n}$ was introduced in [3]. Let $M\subset 
\mathbb{R}
^{n}$ be a submanifold. We say that $M$ is a weak $r$-helix if there exist $%
r $ linearly independent directions $d_{1},...,d_{r}$, such that $M$ \ is a
helix with respect to every $d_{j}$[5].

Recently, M. Ghomi worked out the shadow problem given by H.Wente. And, He
mentioned the shadow boundary in [8]. Ruiz-Hern\'{a}ndez investigated that
shadow boundaries are related to helix submanifolds in [12].

Helix hypersurfaces have been worked in nonflat ambient spaces in [6,7].
Cermelli and Di Scala have also studied helix hypersurfaces in liquid
cristals in [2].

This paper is organized as follows. In section 2, we will give some basic
properties in the general theory of weak $r$-helix, helix submanifolds and
curves. And, in section 3, we will give the important relations between weak 
$r$-helix submanifolds and some special curves such as line of curvature,
asymptotic curve and helix lines.

\section{Basic Properties}

\begin{definition}
Given a submanifold $M\subset 
\mathbb{R}
^{n}$ and an unitary vector $d$ in $%
\mathbb{R}
^{n}$, we say that $M$ is a helix with respect to $d$ if for each $q\in M$
the angle between $d$ and $T_{q}M$ is constant.

Let us recall that a unitary vector $d$ can be decomposed in its tangent and
orthogonal components along the submanifold $M$, i.e. $d=\cos (\theta
)T+\sin (\theta )\xi $ with $\left\Vert T\right\Vert =\left\Vert \xi
\right\Vert =1$, where $T\in TM$ and $\xi \in \vartheta (M)$.The angle
between $d$ and $T_{q}M$ is constant if and only if the tangential component
of $d$ has constant length $\left\Vert \cos (\theta )T\right\Vert =\cos
(\theta )$. We can assume that $0<\theta <\frac{\pi }{2}$ and we can say
that $M$ is a helix of angle $\theta $.

We will call $T$ and $\xi $ the tangent and normal directions of the helix
submanifold $M$. We can call $d$ the helix direction of $M$ and we will
assume $d$ always to be unitary [5].
\end{definition}

\begin{definition}
Let $M\subset 
\mathbb{R}
^{n}$ be a helix submanifold of angle $\theta \neq \frac{\pi }{2}$ w.r. to
the direction $d\in 
\mathbb{R}
^{n}$. We will call the integral curves of the tangent direction $T$ of the
helix $M$, the helix lines of $M$ w.r.to $d$. Moreover, we say that a helix
submanifold $M\subset 
\mathbb{R}
^{n}$ is a ruled helix if all the helix lines of $M$ are straight lines [5].
\end{definition}

\noindent \textbf{Proposition 2.1 }\textit{The helix lines of a helix
submanifold }$M\subset 
\mathbb{R}
^{n}$\textit{\ are geodesic in }$M$\textit{\ [5].}

\begin{definition}
A submanifold $M\subset 
\mathbb{R}
^{n}$ is a weak $r$-helix if there exist $r$ linearly independent directions 
$d_{1},...,d_{r},$ such that $M$ is a helix with respect to every $d_{j}$
[5].
\end{definition}

\noindent \textbf{Remark 2.1 }\textit{We say that }$\xi $\textit{\ is
parallel normal in the direction }$X\in TM$\textit{\ if }$\ \nabla
_{X}^{\perp }\xi =0$\textit{. Here, }$\nabla ^{\perp }$\textit{\ denotes the
normal connection of }$M$\textit{\ induced by the standard covariant
derivative of the Euclidean ambient. Let us denote by }$D$\textit{\ the
standard covariant derivative in }$%
\mathbb{R}
^{n}$\textit{\ and by }$\nabla $\textit{\ the induced covariant derivative
in }$M$\textit{. Let }$A^{\xi }$\textit{\ and }$V$\textit{\ be the shape
operator and the second fundamental form of }$M\subset 
\mathbb{R}
^{n}$\textit{\ [5]. }

\begin{definition}
Let $M$ be a submanifold of the Riemannian manifold $%
\mathbb{R}
^{n}$ and let $D$ be the Riemannian connexion on $%
\mathbb{R}
^{n}$. For $C^{\infty \text{ }}$fields $X$ and $Y$ with domain $A$ on $M$
(and tangent to $M$), define $\nabla _{X}Y$ and $V(X,Y)$ on $A$ by
decomposing $D_{X}Y$ into unique tangential and normal components,
respectively; thus,%
\begin{equation*}
D_{X}Y=\nabla _{X}Y+V(X,Y)\text{. }
\end{equation*}%
Then, $\nabla $ is the Riemannian connexion on $M$ and $V$ is a symmetric
vector-valued 2-covariant $C^{\infty \text{ }}$tensor called the second
fundamental tensor. The above composition equation is called the Gauss
equation [9].
\end{definition}

\begin{definition}
Let $M$ be a submanifold of the Riemannian manifold $%
\mathbb{R}
^{n}$ , let $D$ be the Riemannian connexion on $%
\mathbb{R}
^{n}$ and let $\nabla $ be the Riemannian connexion on $M$. Then, the
formula of Weingarten%
\begin{equation*}
D_{X}\xi =-A^{\xi }(X)+\nabla _{X}^{\bot }\xi
\end{equation*}%
for every $X$ tangent to $M$ and for every $\xi $ normal to $M$. $A^{\xi }$
is the shape operator associated to $\xi $ also known as the Weingarten
operator corresponding to $\xi $ and $\nabla ^{\bot }$ is the induced
connexion in the normal bundle of $M$ . $A^{\xi }(X)$ is also the tangent
component of $-D_{X}\xi $ and will be denoted by $A^{\xi }(X)=$tang$%
(-D_{X}\xi $ $)$[10,11].
\end{definition}

\noindent \textbf{Remark 2.2 }\textit{Let us observe that for any helix
euclidean submanifold }$M$\textit{, the following system holds for every }$%
X\in TM$\textit{, where the helix direction }$d=\cos (\theta )T+\sin (\theta
)\xi $\textit{.\medskip \vspace{0in} }%
\begin{equation}
\cos (\theta )\nabla _{X}T-\sin (\theta )A^{\xi }(X)=0
\end{equation}%
\begin{equation}
\cos (\theta )V(X,T)+\sin (\theta )\nabla _{X}^{\bot }\xi =0
\end{equation}%
[5].

\begin{definition}
If $\alpha $ is a (unit speed) curve in $M$ with $C^{\infty \text{ }}$unit
tangent $T$, then $V(T,T)$ is called normal curvature vector field of $%
\alpha $ and $k_{T}=\left\Vert V(T,T)\right\Vert $ is called the normal
curvature of $\alpha $ [9].
\end{definition}

\section{Main Theorems and Definitions}

\begin{theorem}
Let $M\subset 
\mathbb{R}
^{n}$ be a weak $r$-helix submanifold with respect to the directions $%
d_{j}\in 
\mathbb{R}
^{n}$, $j=1,...,r$. Let $D$ be Riemannian connexion (standard covariant
derivative) on $E^{n}$ and $\nabla $ be Riemannian connexion on $M$. Let us
assume that $\alpha :I\subset IR\rightarrow M$ is a unit speed (parametrized
by arc length function $s$) curve on $M$ with unit tangent $T$ . Then,the
normal component $\xi _{j}$ of $d_{j}$ is parallel normal in the direction $T
$ if and only if  $T_{j}^{^{\prime }}\in $ $TM$ along the curve $\alpha $,
where $T_{j}$ is the unit tangent component of the direction $d_{j}$.
\end{theorem}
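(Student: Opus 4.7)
The plan is to read off the conclusion from the Gauss decomposition of the ambient derivative of $T_{j}$ along $\alpha$, combined with the helix identity (2.2) of Remark 2.2 applied to the tangent vector $X=T$ of the curve. First I would observe that, because $\alpha$ is unit speed with tangent $T$, differentiation along $\alpha$ with the ambient connexion gives $T_{j}^{\prime}=D_{T}T_{j}$, and by the Gauss equation
\begin{equation*}
T_{j}^{\prime}=D_{T}T_{j}=\nabla_{T}T_{j}+V(T,T_{j}).
\end{equation*}
Since $\nabla_{T}T_{j}\in TM$ and $V(T,T_{j})\in \vartheta(M)$, the condition $T_{j}^{\prime}\in TM$ along $\alpha$ is therefore equivalent to the vanishing of the normal piece $V(T,T_{j})$.

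Next I would invoke equation (2.2) of Remark 2.2 for each helix direction $d_{j}=\cos(\theta_{j})T_{j}+\sin(\theta_{j})\xi_{j}$ with the choice $X=T$, which reads
\begin{equation*}
\cos(\theta_{j})\,V(T,T_{j})+\sin(\theta_{j})\,\nabla_{T}^{\perp}\xi_{j}=0.
\end{equation*}
Because by Definition 2.1 we may assume $0<\theta_{j}<\tfrac{\pi}{2}$, both $\cos(\theta_{j})$ and $\sin(\theta_{j})$ are nonzero, so this identity forces $V(T,T_{j})=0$ if and only if $\nabla_{T}^{\perp}\xi_{j}=0$. Chaining this with the equivalence of the previous paragraph yields the desired biconditional: $T_{j}^{\prime}\in TM$ along $\alpha$ iff $\xi_{j}$ is parallel normal in the direction $T$.

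Overall there is no real obstacle, since Remark 2.2 already packages the helix condition into exactly the identity we need; the argument is essentially a one-line decomposition followed by solving a $2\times 2$ linear relation. The only subtlety to keep an eye on is the interpretation of $T_{j}^{\prime}$: one must be careful that it denotes the ambient derivative $D_{T}T_{j}$ and not $\nabla_{T}T_{j}$, otherwise the statement becomes tautological on one side. Once this is clear, the proof reduces to a direct application of the Gauss equation together with (2.2), using only the nondegeneracy $\sin(\theta_{j})\cos(\theta_{j})\neq 0$ built into the definition of a helix submanifold.
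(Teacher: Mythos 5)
Your proposal is correct and follows essentially the same route as the paper's proof: both use the Gauss decomposition $D_{T}T_{j}=\nabla_{T}T_{j}+V(T,T_{j})$ to identify $T_{j}^{\prime}\in TM$ with $V(T,T_{j})=0$, and then equation (2.2) of Remark 2.2 with $X=T$ together with $0<\theta_{j}<\tfrac{\pi}{2}$ to identify that with $\nabla_{T}^{\perp}\xi_{j}=0$. The only cosmetic difference is that you present it as a single chain of equivalences while the paper argues the two implications separately.
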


\begin{proof}
We assume that the normal component $\xi _{j}$ of $d_{j}$ is parallel normal
in the direction $T$. Since $T$ and $T_{j}\in TM$, from the Gauss equation
in Definition (2.4),%
\begin{equation}
D_{T}T_{j}=\nabla _{T}T_{j}+V(T,T_{j})
\end{equation}%
According to the Theorem, since the normal component $\xi _{j}$ of $d_{j}$
is parallel normal in the direction $T$, i.e.$\nabla _{T}^{\bot }\xi _{j}=0$
(see Remark 2.1), from (2.2) in Remark 2.2 ($0<\theta <\frac{\pi }{2}$) 
\begin{equation}
V(T,T_{j})=0
\end{equation}%
So, by using (3.1),(3.2) and Frenet formulas, we have:%
\begin{equation*}
D_{T}T_{j}=\frac{dT_{j}}{ds}=T_{j}^{^{\prime }}=\nabla _{T}T_{j}\text{.}
\end{equation*}%
That is, the vector field $T_{j}^{^{\prime }}\in T_{\alpha (t)}M$, where $%
T_{\alpha (t)}M$ is the tangent space of $M$.

Conversely, let us assume that $T_{j}^{^{\prime }}\in $ $TM$ along the curve 
$\alpha $. Then, from Gauss equation, $V(T,T_{j})=0$. Hence, from (2.2) in
Remark 2.2 ($0<\theta <\frac{\pi }{2}$), $\nabla _{T}^{\bot }\xi _{j}=0$ .
That is, the normal component $\xi _{j}$ of $d_{j}$ is parallel normal in
the direction $T$. This completes the proof.
\end{proof}

\begin{theorem}
Let $M\subset 
\mathbb{R}
^{n}$ be a weak $r$-helix submanifold with respect to the directions $%
d_{j}\in 
\mathbb{R}
^{n}$, $j=1,...,r$. Let $D$ be Riemannian connexion (standard covariant
derivative) on $E^{n}$ and $\nabla $ be Riemannian connexion on $M$. Then,
the normal curvature of the unit integral curve $\alpha _{j}$ of $T_{j}$
equals $\left\vert k_{j}\right\vert $, where $T_{j}$ is the unit tangent
component of the direction $d_{j}$ and $k_{j}$ is the first curvature of $%
\alpha _{j}$.
\end{theorem}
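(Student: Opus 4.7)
The plan is to combine the Gauss equation with the geodesic property of helix lines (Proposition 2.1) and the Frenet formula for the unit-speed curve $\alpha_j$. Once we have that $\alpha_j$ is a geodesic in $M$, the Gauss decomposition of $D_{T_j}T_j$ degenerates to its normal part, which is exactly what the normal curvature measures.

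First I would observe that the integral curve $\alpha_j$ of $T_j$ is, by Definition 2.2, precisely a helix line of $M$ with respect to the helix direction $d_j$. Proposition 2.1 then tells us that $\alpha_j$ is a geodesic of $M$, that is,
\begin{equation*}
\nabla_{T_j}T_j \;=\; 0
\end{equation*}
along $\alpha_j$.

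Next I would apply the Gauss equation from Definition 2.4 to the pair $(X,Y)=(T_j,T_j)$, obtaining
\begin{equation*}
D_{T_j}T_j \;=\; \nabla_{T_j}T_j + V(T_j,T_j) \;=\; V(T_j,T_j),
\end{equation*}
so the full ambient derivative of $T_j$ along $\alpha_j$ coincides with its second fundamental form. Since $\alpha_j$ is unit speed with tangent $T_j$, we have $\alpha_j'=T_j$ and hence $D_{T_j}T_j = T_j'=\alpha_j''$. By the standard Frenet formula for $\alpha_j$, this yields $\|\alpha_j''\|=|k_j|$, where $k_j$ is the first curvature of $\alpha_j$.

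Finally, taking norms in the chain of equalities and invoking Definition 2.7 for the normal curvature $k_{T_j}=\|V(T_j,T_j)\|$ gives
\begin{equation*}
k_{T_j} \;=\; \|V(T_j,T_j)\| \;=\; \|D_{T_j}T_j\| \;=\; \|\alpha_j''\| \;=\; |k_j|,
\end{equation*}
which is the desired conclusion. The only non-trivial input is the geodesic property of helix lines; once this is granted, the rest is a direct identification of the Gauss decomposition with the Frenet formula, and I do not expect any obstacle beyond properly citing Proposition 2.1.
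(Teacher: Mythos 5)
Your proposal is correct and follows exactly the paper's own argument: invoke Proposition 2.1 to get $\nabla_{T_j}T_j=0$, reduce the Gauss equation to $D_{T_j}T_j=V(T_j,T_j)$, and identify this with $k_jV_{2_j}$ via the Frenet formulas before taking norms. No substantive difference from the published proof.
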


\begin{proof}
Since $\alpha _{j}$ is the integral curve of $T_{j}\in TM$, we can write%
\begin{equation*}
\frac{d\alpha _{j}}{ds}=T_{j}
\end{equation*}%
along the curve $\alpha _{j}$. Also, since $T_{j}\in TM$, from the Gauss
equation in Definition (2.4)%
\begin{equation}
D_{T_{j}}T_{j}=\nabla _{T_{j}}T_{j}+V(T_{j},T_{j})\text{.}
\end{equation}%
On the other hand, the integral curves of $T_{j}$ are geodesics in $M$
according to the Proposition 2.1. That is%
\begin{equation}
\nabla _{T_{j}}T_{j}=0\text{.}
\end{equation}%
So, by using (3.3),(3.4) and Frenet formulas, we have:%
\begin{eqnarray*}
D_{T_{j}}T_{j} &=&\frac{dT_{j}}{ds}=T_{j}^{^{\prime }} \\
&=&k_{j}V_{2_{j}}\text{ (}V_{2_{j}}\text{ is the unit principal normal of }%
\alpha _{j}\text{)} \\
&=&V(T_{j},T_{j})\text{.}
\end{eqnarray*}%
Hence, we get $\left\Vert V(T_{j},T_{j})\right\Vert =\left\Vert
k_{j}V_{2_{j}}\right\Vert =$ $\left\vert k_{j}\right\vert $.This completes
the proof.
\end{proof}

\begin{lemma}
Let $M\subset 
\mathbb{R}
^{n}$ be a weak $r$-helix submanifold with respect to the directions $%
d_{j}\in 
\mathbb{R}
^{n}$, $j=1,...,r$. Let $D$ be the Riemannian connexion on $%
\mathbb{R}
^{n}$ and let $\nabla $ be the Riemannian connexion on $M$. If $\alpha
_{j}:I\subset IR\rightarrow M$ is the (unit speed) integral curve of the
unit tangent direction $T_{j}$ of $d_{j}$, then $V_{2_{j}}\in \vartheta (M)$%
, where $V_{2_{j}}$ is the unit principal normal of $\alpha _{j}$ and $%
\vartheta (M)$ is the normal space of $M$.
\end{lemma}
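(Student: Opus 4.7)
The plan is to leverage Theorem 3.2, which has just established the identity $D_{T_j}T_j = V(T_j, T_j)$, together with the Frenet expansion $D_{T_j}T_j = T_j' = k_j V_{2_j}$ for the unit speed integral curve $\alpha_j$. Combining these gives the single equation
\begin{equation*}
k_j V_{2_j} = V(T_j, T_j).
\end{equation*}

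First, I would appeal to Proposition 2.1 (helix lines are geodesics) to guarantee that $\nabla_{T_j} T_j = 0$, so that the Gauss decomposition $D_{T_j} T_j = \nabla_{T_j} T_j + V(T_j, T_j)$ collapses to $D_{T_j} T_j = V(T_j, T_j)$, exactly as used in the previous theorem. Next I would invoke Definition 2.4: the second fundamental tensor $V$ takes values in the normal bundle $\vartheta(M)$ by construction, so $V(T_j, T_j) \in \vartheta(M)$ at every point of $\alpha_j$. The identity above then forces $k_j V_{2_j} \in \vartheta(M)$. Since $V_{2_j}$ is a unit vector, dividing through by $k_j$ yields $V_{2_j} \in \vartheta(M)$.

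The one genuinely delicate point, and the step I would flag as the main obstacle, is that $V_{2_j}$ is only defined (in the Frenet sense) at points where the first curvature $k_j$ is nonzero. I would therefore state the result under the tacit assumption $k_j \neq 0$ along $\alpha_j$ (or restrict to the open subset where this holds). At points with $k_j = 0$, the curve is locally a straight line in $\mathbb{R}^n$ and the principal normal is not uniquely determined, so the claim is vacuous there. Everywhere else the argument is a one-line consequence of Theorem 3.2 and the normal-valuedness of $V$, so no further calculation is needed.
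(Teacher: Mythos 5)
Your proposal is correct and follows essentially the same route as the paper: Gauss equation plus Proposition 2.1 to get $D_{T_j}T_j = V(T_j,T_j)$, then the Frenet identity $D_{T_j}T_j = k_j V_{2_j}$ and the normal-valuedness of $V$ to conclude $V_{2_j}\in\vartheta(M)$. Your explicit caveat that $V_{2_j}$ is only defined where $k_j\neq 0$ is a point the paper leaves tacit, and is a worthwhile addition.
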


\begin{proof}
Since $T_{j}\in TM$, from the Gauss equation in Definition 2.4,%
\begin{equation}
D_{T_{j}}T_{j}=\nabla _{Tj}T_{j}+V(T_{j},T_{j})
\end{equation}%
According to the Proposition 2.1, since $\alpha _{j}$ is a geodesic curve on 
$M$,%
\begin{equation}
\nabla _{T_{j}}T_{j}=0
\end{equation}%
So, by using (3.5), (3.6) and Frenet formulas, we get:%
\begin{equation*}
D_{T_{j}}T_{j}=k_{j}V_{2_{j}}=V(T_{j},T_{j})\text{. (}k_{j}\text{ is the
first curvature of }\alpha _{j}\text{)}
\end{equation*}%
That is, the vector field $V_{2_{j}}\in \vartheta (M)$ along the curve $%
\alpha _{j}$. This completes the proof.
\end{proof}

\begin{definition}
Let $M$ be a submanifold of the Riemannian manifold of $%
\mathbb{R}
^{n}$and let $A^{\xi }$ be the shape operator in a direction $\xi \in $\ \ $%
\vartheta (M)$. For a vector field $X\in TM$, if $\left\langle A^{\xi
}(X),X\right\rangle =0$, then $X$ will be called asymptotic in the direction
\ $\xi $.
\end{definition}

\begin{definition}
Let $M$ be a submanifold of the Riemannian manifold of $%
\mathbb{R}
^{n}$ and let $\alpha :I\subset IR\rightarrow M$ be a unit speed curve in $M$%
. If $\left\langle A^{\xi }(T),T\right\rangle =0$, then the curve $\alpha $
will be called an asymptotic curve in the direction $\xi $, where $T$ unit
tangent vector field of $\alpha $ and $\xi \in $\ \ $\vartheta (M)$ (normal
space).
\end{definition}

\begin{definition}
The second normal space of $M$ $\subset 
\mathbb{R}
^{n}$ consist of the normal vectors, $\xi \in $\ \ $\vartheta (M)$, such
that the shape operator in its direction is zero, i.e. $A^{\xi }=0$ [5].
\end{definition}

\begin{theorem}
Let $M$ be a submanifold of the Riemannian manifold of $%
\mathbb{R}
^{n}$. Then every curve in $M$ is asymptotic in the direction $\xi $ if $\xi
\in $\ \ $\vartheta (M)$ is an element of the second normal space $M$ $%
\subset 
\mathbb{R}
^{n}$.
\end{theorem}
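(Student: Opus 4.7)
The plan is to show this directly from the two relevant definitions, since the statement is essentially an immediate consequence of chaining them together. There is no real analytic or geometric content to unpack; the key is to recognize that the hypothesis ``$\xi$ belongs to the second normal space'' is a pointwise algebraic condition on the shape operator, and the conclusion ``every curve is asymptotic in the direction $\xi$'' is a pointwise algebraic condition on the shape operator applied to tangent vectors.

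First I would take an arbitrary unit speed curve $\alpha : I \subset \mathbb{R} \to M$ with unit tangent vector field $T$ along $\alpha$. At each point $\alpha(t)$, the tangent $T = T(t)$ lies in $T_{\alpha(t)}M$. Next I would invoke Definition 3.4: the assumption $\xi \in \vartheta(M)$ being an element of the second normal space of $M \subset \mathbb{R}^n$ means precisely that $A^{\xi} = 0$ as an operator on $TM$, i.e.\ $A^{\xi}(X) = 0$ for every $X \in TM$.

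Applying this with $X = T$ yields $A^{\xi}(T) = 0$ along $\alpha$, and therefore
\begin{equation*}
\langle A^{\xi}(T), T \rangle = \langle 0, T \rangle = 0.
\end{equation*}
By Definition 3.3, this is exactly the condition for $\alpha$ to be an asymptotic curve in the direction $\xi$. Since $\alpha$ was an arbitrary unit speed curve in $M$, every curve in $M$ is asymptotic in the direction $\xi$, which completes the proof.

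There is no genuine obstacle here: the proof is a one-line unfolding of definitions. If anything, the only minor subtlety worth flagging is that Definition 3.3 is phrased for unit speed curves, but this poses no restriction, since every regular curve in $M$ admits a unit speed reparametrization and the asymptotic condition $\langle A^{\xi}(T), T\rangle = 0$ depends only on the direction of $T$, not its length.
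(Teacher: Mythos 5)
Your proof is correct and follows essentially the same route as the paper's: unfold the definition of the second normal space to get $A^{\xi}(X)=0$ for all $X\in TM$, apply it to the unit tangent $T$ of an arbitrary curve, and conclude $\langle A^{\xi}(T),T\rangle =0$. The only discrepancy is cosmetic (your definition numbers are shifted by one relative to the paper's Definitions 3.2 and 3.3).
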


\begin{proof}
Let assume that $\xi \in $\ \ $\vartheta (M)$ is an element of the second
normal space $M$ $\subset 
\mathbb{R}
^{n}$ and let $\alpha $ be an arbitrary curve with the unit tangent $T$ in $%
M $. Then, from Definition 3.3, $A^{\xi }=0$ for every $X\in TM$. And, in
particular since $T\in TM$, $A^{\xi }\left( T\right) =0$. Hence, we deduce
that $\left\langle A^{\xi }(T),T\right\rangle =0$. Consequently, since $%
\alpha $ is an arbitrary curve, from Definition 3.2, every curve in $M$ is
asymptotic in the direction $\xi $. This completes the proof.
\end{proof}

\begin{theorem}
Let $M\subset 
\mathbb{R}
^{n}$ be a weak $r$-helix submanifold with respect to the directions $%
d_{j}\in 
\mathbb{R}
^{n}$, $j=1,...,r$. Let $D$ be the Riemannian connexion on $%
\mathbb{R}
^{n}$ and let $\nabla $ be the Riemannian connexion on $M$. If $T_{j}$ is
parallel in $M$ ,i.e. $\nabla _{X}T_{j}=0$ for every $X\in TM$, then every $%
X\in TM$ is asymptotic in the direction $\xi _{j}$. Here, $T_{j}$ is tangent
component of $d_{j}$ and $\xi _{j}$ is normal component of $d_{j}$.
Conversely, if every $X\in TM$ is asymptotic in the direction $\xi _{j}$,
then $T_{j}$ is parallel in $M$.
\end{theorem}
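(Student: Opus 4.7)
The plan is to derive everything directly from equation (2.1) of Remark 2.2 applied to each $d_j$, namely
\[
\cos(\theta_j)\,\nabla_X T_j - \sin(\theta_j)\,A^{\xi_j}(X) = 0,
\]
which holds for every $X\in TM$ because $M$ is a helix with respect to $d_j$. The assumption $0<\theta_j<\pi/2$ guarantees that $\sin(\theta_j)$ and $\cos(\theta_j)$ are both nonzero, so each of $\nabla_X T_j$ and $A^{\xi_j}(X)$ determines the other up to a nonvanishing scalar factor. This is the one observation that makes both directions almost immediate.

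For the forward direction, I would assume $\nabla_X T_j = 0$ for every $X\in TM$. Plugging this into the displayed relation and dividing by $\sin(\theta_j)\neq 0$ yields $A^{\xi_j}(X)=0$ for every $X\in TM$. In particular, $\langle A^{\xi_j}(X),X\rangle = 0$ for every such $X$, which by Definition~3.2 means that every curve in $M$ with tangent $X$ is asymptotic in the direction $\xi_j$, as required. (In fact this shows $\xi_j$ lies in the second normal space, recovering the situation of Theorem~3.3.)

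For the converse, I would assume that every $X\in TM$ is asymptotic in the direction $\xi_j$, i.e., $\langle A^{\xi_j}(X),X\rangle = 0$ for all $X\in TM$. The main step here is to promote this quadratic-form vanishing to the vanishing of the operator itself. Since $A^{\xi_j}$ is self-adjoint (the shape operator is symmetric), polarization gives
\[
2\,\langle A^{\xi_j}(X),Y\rangle = \langle A^{\xi_j}(X+Y),X+Y\rangle - \langle A^{\xi_j}(X),X\rangle - \langle A^{\xi_j}(Y),Y\rangle = 0
\]
for all $X,Y\in TM$, hence $A^{\xi_j}\equiv 0$ on $TM$. Returning to equation (2.1) and dividing by $\cos(\theta_j)\neq 0$, we conclude $\nabla_X T_j = 0$ for every $X\in TM$, i.e.\ $T_j$ is parallel in $M$.

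The only subtle point — and thus the ``main obstacle'' — is the polarization step in the converse, which relies on the self-adjointness of the shape operator. Once that is invoked, both implications are one-line consequences of the helix relation (2.1), so the proof should be quite short.
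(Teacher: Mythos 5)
Your proof is correct and rests on the same engine as the paper's: equation (2.1) of Remark 2.2, with $0<\theta_j<\pi/2$ ensuring both trig factors are nonzero. The forward direction is identical to the paper's (apart from a harmless citation slip: the relevant definition for ``every $X\in TM$ is asymptotic'' is Definition 3.1, not 3.2). In the converse the two arguments diverge slightly, and yours is the more careful one. The paper substitutes (2.1) into $\langle A^{\xi_j}(X),X\rangle=0$ to get $\langle \nabla_X T_j,X\rangle=0$ for all $X$ and then asserts $\nabla_X T_j=0$ outright; that last step is not automatic for an arbitrary linear operator $X\mapsto \nabla_X T_j$, since the vanishing of a quadratic form only kills the symmetric part of the operator. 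It is true here precisely because $\nabla_X T_j=\tan(\theta_j)A^{\xi_j}(X)$ is self-adjoint, which is exactly the fact you invoke. By polarizing $\langle A^{\xi_j}(X),X\rangle=0$ first to conclude $A^{\xi_j}\equiv 0$ and only then applying (2.1), you make explicit the symmetry hypothesis the paper uses tacitly, so your version closes a small gap in the published argument rather than introducing one.
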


\begin{proof}
We assume that $T_{j}$ is parallel in $M$. That is, $\nabla _{X}T_{j}=0$ for
every $X\in TM$. So, by using the equation (2.1), we deduce that $A^{\xi
_{j}}(X)=0$ for every $X\in TM$ ($\theta \neq 0$). Due to the fact that $%
A^{\xi _{j}}(X)=0$ for every $X\in TM$, $\left\langle A^{\xi
_{j}}(X),X\right\rangle =0$ for every $X\in TM$. Consequently, by using the
Definition 3.1, every $X\in TM$ is asymptotic in the direction $\xi _{j}$.

Conversely, let us assume that every $X\in TM$ is asymptotic in the
direction $\xi _{j}$. Then, $\left\langle A^{\xi _{j}}(X),X\right\rangle =0$
for every $X\in TM$. Moreover, by using the equation (2.1), we obtain $%
\left\langle \nabla _{X}T_{j},X\right\rangle =0$ for every $X\in TM$ ($%
\theta \neq \frac{\pi }{2}$). Hence, $\ $we have $\nabla _{X}T_{j}=0$. This
completes the proof.
\end{proof}

\begin{theorem}
Let $M\subset 
\mathbb{R}
^{n}$ be a weak $r$-helix submanifold with respect to the directions $%
d_{j}\in 
\mathbb{R}
^{n}$, $j=1,...,r$ and let $D$ be the Riemannian connexion on $%
\mathbb{R}
^{n}$. If $\alpha _{j}:I\subset IR\rightarrow M$ is the (unit speed)integral
curve of the unit tangent direction $T_{j}$ of $d_{j}$, then the curve $%
\alpha _{j}$ is asymptotic in the direction $\xi _{j}$, where $\xi _{j}$ is
orthogonal component of $d_{j}$.
\end{theorem}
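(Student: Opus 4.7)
The plan is to combine the structural identity (2.1) from Remark 2.2 with the geodesic property of helix lines (Proposition 2.1), evaluated along $\alpha_{j}$ itself. The whole point is that $T_{j}$ is both the tangent of the curve and the tangent component of the helix direction $d_{j}$, so the same vector plays two roles and equation (2.1) collapses almost immediately to the desired asymptotic relation.

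Concretely, first I would apply Remark 2.2 to the helix direction $d_{j}=\cos(\theta_{j})T_{j}+\sin(\theta_{j})\xi_{j}$ and substitute $X=T_{j}$ in (2.1); this gives
\begin{equation*}
\cos(\theta_{j})\,\nabla_{T_{j}}T_{j}-\sin(\theta_{j})\,A^{\xi_{j}}(T_{j})=0.
\end{equation*}
Next, since $\alpha_{j}$ is the integral curve of the tangent direction $T_{j}$ of the helix, Proposition 2.1 says $\alpha_{j}$ is a geodesic in $M$, hence $\nabla_{T_{j}}T_{j}=0$. Using the standing assumption $0<\theta_{j}<\pi/2$ so that $\sin(\theta_{j})\neq 0$, I would conclude $A^{\xi_{j}}(T_{j})=0$. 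Taking the inner product with $T_{j}$ yields $\langle A^{\xi_{j}}(T_{j}),T_{j}\rangle=0$, which by Definition 3.2 is exactly the statement that $\alpha_{j}$ is asymptotic in the direction $\xi_{j}$.

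There is essentially no deep obstacle here; the argument is a two-line consequence of facts already established. The only thing to be careful about is the range of the helix angle: the derivation requires $\sin(\theta_{j})\neq 0$, so I should explicitly record that the excluded case $\theta_{j}=0$ (which would mean $d_{j}$ is tangential, contradicting the assumption that $d_{j}$ has a genuine normal component $\xi_{j}$) is ruled out by the standing convention $0<\theta_{j}<\pi/2$ from Definition 2.1. Aside from that, the proof just chains (2.1) with Proposition 2.1 and Definition 3.2.
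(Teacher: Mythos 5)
Your proof is correct, but it takes a genuinely different (and shorter) route than the paper. The paper differentiates the decomposition $d_{j}=\cos (\theta _{j})T_{j}+\sin (\theta _{j})\xi _{j}$ along $\alpha _{j}$, uses the Frenet formula $T_{j}^{\prime }=k_{j}V_{2_{j}}$ together with Lemma 3.1 (which says $V_{2_{j}}\in \vartheta (M)$) to conclude that $\xi _{j}^{\prime }$ is normal to $M$, and then reads off $A^{\xi _{j}}(T_{j})=\mathrm{tang}(-\xi _{j}^{\prime })=0$. You instead invoke the tangential Gauss--Weingarten identity (2.1) of Remark 2.2 with $X=T_{j}$ and kill the first term by the geodesic property of helix lines (Proposition 2.1), obtaining $\sin (\theta _{j})A^{\xi _{j}}(T_{j})=0$ directly. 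The two arguments are two faces of the same computation --- (2.1) is precisely the tangential component of $D_{X}d_{j}=0$ --- but yours bypasses Lemma 3.1 and the Frenet apparatus entirely, and in particular does not need to name the principal normal $V_{2_{j}}$ or worry about the degenerate case $k_{j}=0$ where $V_{2_{j}}$ is undefined (the paper's use of Frenet formulas tacitly assumes $k_{j}\neq 0$; your argument does not). Both proofs actually establish the stronger conclusion $A^{\xi _{j}}(T_{j})=0$, not merely $\left\langle A^{\xi _{j}}(T_{j}),T_{j}\right\rangle =0$. Your care with $\sin (\theta _{j})\neq 0$ is appropriate and matches the standing convention $0<\theta _{j}<\frac{\pi }{2}$ of Definition 2.1.
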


\begin{proof}
Since $M$ is a weak $r$-helix submanifold, we can decompose the direction $%
d_{j}$ in its tangent and normal components:%
\begin{equation}
d_{j}=\cos (\theta _{j})T_{j}+\sin (\theta _{j})\xi _{j}
\end{equation}%
From (3.7), by taking derivatives on both sides along the curve $\alpha _{j}$%
, we have:%
\begin{equation*}
0=\cos (\theta _{j})T_{j}^{^{\prime }}+\sin (\theta _{j})\xi _{j}^{^{\prime
}}
\end{equation*}%
and by using Frenet formulas, we get:%
\begin{equation}
0=\left( k_{j}\cos (\theta _{j})\right) V_{2_{j}}+\sin (\theta _{j})\xi
_{j}^{^{\prime }}
\end{equation}%
According to the Lemma 3.1, since $V_{2_{j}}\in \vartheta (M)$ along the
curve $\alpha _{j}$, from (3.8), we deduce that $\xi _{j}^{^{\prime }}\in
\vartheta (M)$. On the other hand,%
\begin{eqnarray*}
A^{\xi _{j}}(T_{j}) &=&\text{tang}(-D_{T_{j}}\xi _{j}) \\
&=&\text{tang}(-\xi _{j}^{^{\prime }})
\end{eqnarray*}%
and since $\xi _{j}^{^{\prime }}\in \vartheta (M)$, we obtain tang$(-\xi
_{j}^{^{\prime }})=0$. So $\left\langle A^{\xi
_{j}}(T_{j}),T_{j}\right\rangle =0$. This completes the proof.
\end{proof}

\begin{definition}
Given an Euclidean submanifold of arbitrary codimension $M\subset 
\mathbb{R}
^{n}$. A curve $\alpha $ in $M$ is called a line of curvature if its tangent 
$T$ is a principal vector at each of its points. In other words, when $T$
(the tangent of $\alpha $) is a principal vector at each of its points, for
an arbitrary normal vector field $\xi \in \vartheta (M)$, the shape operator 
$A^{\xi }$ associated to $\xi $ says $A^{\xi }(T)=$tang$(-$ $D_{T}\xi
)=\lambda _{j}T$ along the curve $\alpha $, where $\lambda _{j}$ is a
principal curvature and $D$ be the Riemannian connexion(standard covariant
derivative) on $%
\mathbb{R}
^{n}$ [1].
\end{definition}

\begin{theorem}
Let $M\subset 
\mathbb{R}
^{n}$ be a weak $r$-helix submanifold with respect to the directions $%
d_{j}\in 
\mathbb{R}
^{n}$, $j=1,...,r$ and let $D$ be the Riemannian connexion on $%
\mathbb{R}
^{n}$. Let us assume that $\alpha :I\subset 
\mathbb{R}
\rightarrow M$ is a (unit speed) line of curvature (not a straight line) for
a normal vector field $\xi \in \vartheta (M)$, where $\xi ^{%
{\acute{}}%
}$ is $TM$ along the curve $\alpha $. Then, $d_{j}\notin Sp\left\{ \xi
,T\right\} $ along the curve $\alpha $ for all the directions $d_{j}$, where 
$T$ is the unit tangent vector field of $\alpha $.
\end{theorem}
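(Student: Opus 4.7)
The plan is to argue by contradiction: fix an index $j$ and suppose that at some point of $\alpha$ one has $d_{j}\in\mathrm{Sp}\{T,\xi\}$, so along $\alpha$ we may write $d_{j}=a(s)T+b(s)\xi$ for scalar functions $a,b$. The first step is to extract information about $a$ and $b$ from the uniqueness of the tangential/normal splitting. Since the helix decomposition $d_{j}=\cos(\theta_{j})T_{j}+\sin(\theta_{j})\xi_{j}$ also splits $d_{j}$ into a tangent piece $\cos(\theta_{j})T_{j}$ and a normal piece $\sin(\theta_{j})\xi_{j}$, comparison forces $aT=\cos(\theta_{j})T_{j}$ and $b\xi=\sin(\theta_{j})\xi_{j}$. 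In particular $|a|=\cos(\theta_{j})$ is constant (so $a'=0$ and $a\neq 0$), and $b\neq 0$ everywhere (otherwise $\xi_{j}=0$, contradicting $|\xi_{j}|=1$).

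The next step is to differentiate the identity $d_{j}=aT+b\xi$ along $\alpha$ in the ambient Euclidean space. Because $d_{j}$ is a constant vector and $a'=0$, this yields
\begin{equation*}
0 \;=\; a\,T' + b'\,\xi + b\,\xi'.
\end{equation*}
At this point the two hypotheses on $\xi$ enter. The line-of-curvature condition gives $A^{\xi}(T)=\lambda T$ for some principal curvature $\lambda$, and the hypothesis $\xi'\in TM$ turns the shape-operator formula $A^{\xi}(T)=\mathrm{tang}(-\xi')$ into the cleaner identity $\xi'=-A^{\xi}(T)=-\lambda T$.

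Substituting $\xi'=-\lambda T$ in the displayed equation and taking the inner product with $T$, all terms vanish except $-b\lambda=0$ (using $\langle T',T\rangle=0$ and $\langle\xi,T\rangle=0$); since $b\neq 0$, this forces $\lambda=0$, and hence $\xi'=0$. The equation collapses to $a\,T'+b'\,\xi=0$. Taking the inner product with $\xi$ and noting that $\langle T',\xi\rangle=\langle V(T,T),\xi\rangle=\langle A^{\xi}(T),T\rangle=0$ yields $b'|\xi|^{2}=0$, so $b'=0$. What remains is $a\,T'=0$ with $a\neq 0$, giving $T'=0$; thus $\alpha$ is a straight line, contradicting the hypothesis. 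Since $j$ was arbitrary, $d_{j}\notin\mathrm{Sp}\{\xi,T\}$ for every $j$.

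The main subtlety to watch for is that $\xi$ is not assumed to have unit length, so the coefficient $b$ is a priori non-constant; this is precisely why the argument must first use the uniqueness of the tangent/normal decomposition to pin down $a$ as a nonzero constant and to rule out $b=0$, and then perform two successive inner-product projections (first onto $T$, then onto $\xi$) so that both the line-of-curvature condition and the parallel-normal condition $\xi'\in TM$ come into play.
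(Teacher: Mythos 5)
Your proof is correct and follows essentially the same route as the paper's: assume $d_{j}\in Sp\{\xi ,T\}$ along $\alpha$, differentiate the resulting decomposition along the curve, and use the line-of-curvature hypothesis together with $\xi^{\prime}\in TM$ to get $\xi^{\prime}=-\lambda T$, forcing $T^{\prime}$ to be a multiple of $T$ and hence zero, contradicting that $\alpha$ is not a straight line. The only difference is one of care rather than of method: you allow the coefficient of $\xi$ to be a non-constant function $b(s)$ and eliminate it by projecting onto $T$ and then onto $\xi$, whereas the paper writes $d_{j}=\cos(\theta_{j})\xi+\sin(\theta_{j})T$ with constant coefficients from the outset (implicitly taking $\xi$ unitary), so your version handles the non-unit $\xi$ case slightly more rigorously.
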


\begin{proof}
We assume that $d_{j}\in Sp\left\{ \xi ,T\right\} $ along the curve $\alpha $
for any direction $d_{j}$.Since $M$ is a weak $r$-helix submanifold, we can
decompose the direction $d_{j}$ in its tangent and normal components:%
\begin{equation}
d_{j}=\cos (\theta _{j})\xi +\sin (\theta _{j})T\text{,}
\end{equation}%
where $\theta _{j}$ is constant. From (3.9), by taking derivatives on both
sides along the curve $\alpha $, we get:%
\begin{equation}
0=\cos (\theta _{j})\xi ^{%
{\acute{}}%
}+\sin (\theta _{j})T^{%
{\acute{}}%
}
\end{equation}%
Moreover, since $\alpha $ is a line of curvature (not a straight line) for a
normal vector field $\xi \in \vartheta (M)$,%
\begin{equation*}
A^{\xi }(T)=tang(-D_{T}\xi )=\text{tang}(-\xi ^{%
{\acute{}}%
})=\lambda _{j}T
\end{equation*}%
along the curve $\alpha $. According to the Theorem, since $\xi ^{%
{\acute{}}%
}$ is $TM$ along the curve $\alpha $, 
\begin{equation}
\text{tang}(-\xi ^{%
{\acute{}}%
})=-\xi ^{%
{\acute{}}%
}=\lambda _{j}T
\end{equation}%
By using the equations (3.10) and (3.11), we deduce that the system $\left\{
T,T^{%
{\acute{}}%
}\right\} $ is linear dependent. But, the system $\left\{ T,T^{%
{\acute{}}%
}\right\} $ is never linear dependent.This is a contradiction. This
completes the proof.
\end{proof}

This latter Theorem has the following corollary.

\begin{corollary}
For an arbitrary direction $d_{j}\in 
\mathbb{R}
^{n}$, if $d_{j}\in Sp\left\{ \xi ,T\right\} $, then the curve $\alpha $ is
not a line of curvature with respect to $\xi \in \vartheta (M)$, where $T$
is the unit tangent vector field of $\alpha $.
\end{corollary}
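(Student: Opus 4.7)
The plan is to observe that the corollary is essentially the contrapositive of Theorem 3.6, and to invoke that theorem directly rather than redo the computation with $\{T,T'\}$ being linearly independent.

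First I would suppose, for contradiction, that $\alpha$ is a line of curvature with respect to $\xi \in \vartheta(M)$, and that the hypothesis $d_{j} \in \mathrm{Sp}\{\xi,T\}$ holds. The aim is then to apply Theorem 3.6, which requires three ingredients: that $\alpha$ be a line of curvature (not a straight line) for $\xi$, that $\xi'$ lie in $TM$ along $\alpha$, and that the ambient manifold be a weak $r$-helix with respect to the $d_{j}$'s. The first is our standing assumption, and the third is built into the setup in which the corollary is stated. Once these are in place, Theorem 3.6 forces $d_{j} \notin \mathrm{Sp}\{\xi, T\}$, which contradicts the hypothesis of the corollary.

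The key conceptual step is to justify that $\xi'$ lies in $TM$ along $\alpha$: this is the point that the author suppresses when writing ``this latter Theorem has the following corollary.'' The cleanest way to supply it is to note that, since $\alpha$ is assumed to be a line of curvature for $\xi$, the Weingarten formula $D_{T}\xi = -A^{\xi}(T) + \nabla_{T}^{\perp}\xi$ combined with $A^{\xi}(T) = \lambda T$ shows that the tangential component of $\xi'=D_T\xi$ equals $-\lambda T$; if in addition $\nabla_{T}^{\perp}\xi = 0$ (which is the natural implicit hypothesis carried over from Theorem 3.6), then $\xi' = -\lambda T \in TM$. With this observation, all hypotheses of Theorem 3.6 are satisfied under the contradiction assumption, and the proof closes.

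The main (minor) obstacle is making explicit the hypothesis on $\xi'$ that the corollary inherits from Theorem 3.6; beyond that, the argument is purely formal, being the contrapositive of that theorem. I would therefore keep the write-up short: state the contradiction hypothesis, remark that the hypotheses of Theorem 3.6 are in force, quote the theorem to obtain $d_{j} \notin \mathrm{Sp}\{\xi,T\}$, note that this contradicts the given $d_{j} \in \mathrm{Sp}\{\xi,T\}$, and conclude that $\alpha$ cannot be a line of curvature with respect to $\xi$.
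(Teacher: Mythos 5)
Your proposal is correct and matches the paper's (implicit) proof: the corollary is simply the contrapositive of Theorem 3.6, which is all the paper intends by ``this latter Theorem has the following corollary.'' You are also right to flag that the corollary's statement silently inherits the hypotheses of Theorem 3.6 (that $\alpha$ is not a straight line and that $\xi^{\prime}=D_{T}\xi$ is tangent to $M$ along $\alpha$, equivalently $\nabla_{T}^{\perp}\xi=0$), which must be assumed for the contrapositive to apply.
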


\begin{theorem}
Let $M\subset 
\mathbb{R}
^{n}$ be a full submanifold (it is not contained in a hyperplane of the
ambient $%
\mathbb{R}
^{n}$) which is a helix with respect to the direction $d$. Let $\xi $ be the
normal component of $d$, i.e. $d=\cos (\theta )T+\sin (\theta )\xi $. Then $%
M $ is a ruled helix(see definition 2.2) if and only if $\xi $ is $\nabla
_{T}^{\bot }\xi =0$ [5].
\end{theorem}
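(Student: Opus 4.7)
My plan is to reduce the ruled-helix condition to the vanishing of a particular value of the second fundamental form, and then to rewrite that vanishing in terms of $\nabla_T^\perp \xi$ using the helix structure equation (2.2) of Remark 2.2. The argument should be a short chain of equivalences; I do not anticipate a serious obstacle, since all the machinery is already in place.

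First, I would translate the condition ``$M$ is a ruled helix'' into an analytic statement about $T$. By Definition 2.2 a helix line is an integral curve of the tangent direction $T$, and such a curve is a straight line in $\mathbb{R}^n$ if and only if its ambient acceleration vanishes. Hence $M$ is a ruled helix if and only if $D_T T = 0$ identically on $M$.

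Next, I would apply the Gauss decomposition $D_T T = \nabla_T T + V(T,T)$. Proposition 2.1 states that helix lines are geodesics in $M$, so $\nabla_T T = 0$ along every helix line; since these curves cover $M$, we get $\nabla_T T \equiv 0$ and therefore $D_T T = V(T,T)$. Thus the ruled-helix property is equivalent to $V(T,T) = 0$ on $M$.

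Finally, I would specialize the helix identity (2.2) of Remark 2.2 to $X = T$, obtaining
$$\cos(\theta)\,V(T,T) + \sin(\theta)\,\nabla_T^\perp \xi = 0.$$
Since $M$ is a helix of angle $\theta\in(0,\pi/2)$, both $\cos\theta$ and $\sin\theta$ are nonzero, and so $V(T,T)=0$ is equivalent to $\nabla_T^\perp \xi = 0$. Chaining this with the preceding equivalence yields the theorem. The one place where the ``full'' hypothesis plays a role is in ensuring that the helix angle is genuinely strict: if $M$ lay in an affine hyperplane, $\theta$ could degenerate to $0$ or $\pi/2$ and one of the coefficients in (2.2) would vanish, so the inversion step above would break down. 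Aside from that, the proof is a mechanical unwinding of definitions and I expect no hidden difficulty.
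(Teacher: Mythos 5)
The paper itself gives no proof of this statement: it is Theorem~3.7, imported verbatim from reference [5], so there is no internal argument to compare yours against. Judged on its own, your proof is correct and uses only tools the paper supplies. The chain ``ruled $\Leftrightarrow$ $D_TT=0$ along every helix line $\Leftrightarrow$ $V(T,T)=0$ (via Gauss plus Proposition~2.1) $\Leftrightarrow$ $\nabla_T^{\bot}\xi=0$ (via equation (2.2) with $X=T$ and $\cos\theta,\sin\theta\neq 0$)'' is exactly right, and it is not circular: the paper's own Theorem~3.8 runs the same computation in the opposite logical direction, deducing $V(T,T)=0$ from Theorem~3.7, whereas you obtain $V(T,T)=0$ directly from the definition of a straight line. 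The one inaccuracy is your closing parenthetical on the role of fullness. Fullness does rule out $\theta=\pi/2$ (if $d$ were everywhere normal, then $\langle d,\gamma'\rangle=0$ for every curve $\gamma$ in $M$, forcing $M$ into a hyperplane orthogonal to $d$), but it does not rule out $\theta=0$: a cylinder over a full, non-linear base curve is a full helix submanifold with $d$ everywhere tangent. What actually guarantees $0<\theta<\pi/2$ in your final inversion step is the standing convention of Definition~2.1 (together with Definition~2.2's exclusion of $\theta=\pi/2$), not the fullness hypothesis alone. This is a cosmetic point and does not affect the validity of the argument.
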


\begin{theorem}
Let $M\subset 
\mathbb{R}
^{n}$ be a full submanifold which is a helix with respect to the direction $%
d=\cos (\theta )T+\sin (\theta )\xi $ ($\theta \neq 0$). Then, $M$ is a
ruled helix if and only if the normal curvatures of the helix lines of $M$
with respect to $d$ equal zero.
\end{theorem}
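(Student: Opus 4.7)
The plan is to combine equation (2.2) from Remark 2.2 (evaluated at $X=T$) with the characterization of ruled helices in Theorem 3.6. The bridge between the two sides of the ``iff'' is that a helix line is, by Definition 2.2, the integral curve of the tangential component $T$ of the helix direction $d$, so the unit tangent of an arbitrary helix line is exactly $T$; therefore its normal curvature vector is $V(T,T)$ and its normal curvature is $\Vert V(T,T)\Vert$ by Definition 2.6.

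First I would specialize the identity
\begin{equation*}
\cos(\theta)\,V(X,T)+\sin(\theta)\,\nabla_{X}^{\bot}\xi=0
\end{equation*}
from Remark 2.2 to the case $X=T$, obtaining
\begin{equation*}
\cos(\theta)\,V(T,T)+\sin(\theta)\,\nabla_{T}^{\bot}\xi=0.
\end{equation*}
Since $0<\theta<\frac{\pi}{2}$ by the standing convention in Definition 2.1, both $\cos(\theta)$ and $\sin(\theta)$ are nonzero, so this single relation forces $V(T,T)=0$ if and only if $\nabla_{T}^{\bot}\xi=0$.

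From here the proof is a two-step chain. On one hand, $\Vert V(T,T)\Vert=0$ along every helix line is exactly the statement that the normal curvatures of the helix lines vanish, by Definition 2.6. On the other hand, $\nabla_{T}^{\bot}\xi=0$ is, by Theorem 3.6 (the preceding theorem cited in the excerpt), equivalent to $M$ being a ruled helix, using that $M$ is full. Composing these two equivalences with the one established above yields the desired biconditional.

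The only real point to watch is the application of Theorem 3.6, which requires the hypothesis that $M$ be full and that $\theta\neq 0$; both are assumed. The argument involves no nontrivial computation beyond the substitution $X=T$, so I do not expect a genuine obstacle — the main thing is to make the identification ``helix line $\leftrightarrow$ integral curve of $T$ $\leftrightarrow$ tangent equals $T$'' explicit so that $V(T,T)$ really does compute the normal curvature of every helix line simultaneously.
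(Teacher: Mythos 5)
Your proposal is correct and follows essentially the same route as the paper's own proof: specialize identity (2.2) to $X=T$, use that $\cos(\theta)$ and $\sin(\theta)$ are both nonzero to get $V(T,T)=0$ if and only if $\nabla_{T}^{\bot}\xi=0$, and then close the loop with Definition 2.6 on one side and the ruled-helix characterization (Theorem 3.7 in the paper's numbering, the result you call Theorem 3.6) on the other. Your explicit identification of the helix lines as unit-speed integral curves of $T$, so that their normal curvature is $\left\Vert V(T,T)\right\Vert$, is exactly the bridge the paper uses implicitly.
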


\begin{proof}
We assume that $M$ is a ruled helix. Then, from Theorem 3.7, $\nabla
_{T}^{\bot }\xi =0$ for the direction $d$. So, from (2.2) in Remark 2.2, $%
V(T,T)=0$ (since $M$ is full or $\theta \neq \frac{\pi }{2}$). Hence, the
normal curvatures of the helix lines of $M$ with respect to $d$ equal zero.

Conversely, let us assume that the normal curvatures of the helix lines of $%
M $ with respect to $d$ equal zero. In other words, $V(T,T)=0$. Since $%
\theta \neq 0$, from (2.2) in Remark 2.2, we obtain $\nabla _{T}^{\bot }\xi
=0$. And, from Theorem 3.7, $M$ is a ruled helix. This completes the proof.
\end{proof}

This latter Theorem has the following corollary.

\begin{corollary}
In Theorem 3.8, in particular, let us assume that $M$ is a hypersurface in $%
\mathbb{R}
^{n}$. Then, since the helix lines of $M$ are straight lines of $%
\mathbb{R}
^{n}$ (see Lemma 2.5 in [4]), $M$ is always a ruled helix. Finally, the
normal curvatures of the helix lines of $M$ with respect to $d$ equal always
zero.
\end{corollary}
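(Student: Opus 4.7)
The plan is to chain together three ingredients: the cited Lemma 2.5 from [4], Definition 2.2 of a ruled helix, and the equivalence already established in Theorem 3.8.

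First I would invoke Lemma 2.5 of [4] to assert that when $M$ is a helix hypersurface in $\mathbb{R}^n$, each helix line of $M$ is in fact a straight line of the ambient $\mathbb{R}^n$. This is the only piece of outside input needed; the rest is an application of the definitions and theorems already in the paper.

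Next I would apply Definition 2.2: a helix submanifold whose helix lines are all straight lines of $\mathbb{R}^n$ is by definition a ruled helix. Hence $M$ is a ruled helix, giving the first conclusion of the corollary.

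For the second conclusion, I would appeal to Theorem 3.8. Since $M$ is a ruled helix with respect to $d$, Theorem 3.8 yields immediately that the normal curvatures of the helix lines of $M$ with respect to $d$ vanish. Alternatively, and perhaps more transparently, one could argue directly: along a helix line $\alpha$ with unit tangent $T$, straightness in $\mathbb{R}^n$ means $D_T T = 0$; by the Gauss equation in Definition 2.4 this forces both $\nabla_T T = 0$ and $V(T,T)=0$, and then $k_T = \|V(T,T)\| = 0$ by Definition 2.7. There is no real obstacle here — the statement is essentially a specialization of Theorem 3.8 combined with a cited lemma, so the main task is simply to make the logical chain explicit.
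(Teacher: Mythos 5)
Your argument is correct and follows exactly the chain the paper intends: Lemma 2.5 of [4] gives straightness of the helix lines, Definition 2.2 then makes $M$ a ruled helix, and Theorem 3.8 gives the vanishing of the normal curvatures. Your alternative direct computation ($D_TT=0$ for a straight line, hence $V(T,T)=0$ by the Gauss equation) is a valid and slightly more self-contained way to get the last conclusion, but it is not a different proof in any essential sense.
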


\noindent \textbf{Acknowledgment.} The authors would like to thank referees
for their valuable suggestions and comments that helped to improve the
presentation of this paper.

\end{document}